\numberwithin{equation}{section}
\newtheorem{theorem}{Theorem}
\newtheorem{proposition}[theorem]{Proposition}
\newtheorem{lemma}[theorem]{Lemma}
\newtheorem{corollary}[theorem]{Corollary}
\theoremstyle{definition}
\newtheorem{example}[theorem]{Example}
\newtheorem{remark1}[theorem]{Remark}
\newtheorem{assumption1}[theorem]{Assumption}
\newtheorem{openproblem1}[theorem]{Open problem}
\newtheorem{definition}[theorem]{Definition}
\newenvironment{remark}{\begin{remark1}\rm}{\end{remark1}}
\newenvironment{assumption}{\begin{assumption1}\rm}{\end{assumption1}}
\numberwithin{theorem}{section}
\newcounter{FNC}[page]
\def\newfootnote#1{{\addtocounter{FNC}{2}$^\fnsymbol{FNC}$%
     \let\thefootnote\relax\footnotetext{$^\fnsymbol{FNC}$#1}}}
\newcommand{\R}{\mathds{R}}
\newcommand{\sym}{\mathcal{S}}
\newcommand{\BS}{\mathrm{BS}}
\newcommand{\card}[1]{\lvert{#1}\rvert}
\newcommand{\norm}[1]{\lVert{#1}\rVert}
\newcommand{\define}{\coloneqq}
\newcommand{\suchthat}{\,:\,}
\newcommand{\sprod}[2]{{#1} \bullet {#2}}
\newcommand{\T}{^\top}
\newcommand{\id}{\mathds{I}} 
\newcommand{\floor}[1]{\lfloor{#1}\rfloor}
\DeclareMathOperator{\conv}{conv}
\DeclareMathOperator{\diag}{diag}
\DeclareMathOperator{\tr}{tr}
\title[Irreducible infeasible subsystems of semidefinite systems]{Irreducible
infeasible subsystems of\\ semidefinite systems}
\author{Kai Kellner}
\address[Kai Kellner]{Frankfurt am Main, Germany}
\email{kellner.kai@gmx-topmail.de}
\author{Marc E. Pfetsch}
\address[Marc E. Pfetsch]{Department of Mathematics, TU Darmstadt, Dolivostr.\
15, 64293 Darmstadt, Germany}
\email{pfetsch@mathematik.tu-darmstadt.de}
\author{Thorsten Theobald}
\address[Thorsten Theobald]{Goethe-Universit\"at, FB 12 -- Institut f\"ur
Mathematik, Postfach 11 19 32, 60054 Frankfurt am Main, Germany}
\email{theobald@math.uni-frankfurt.de}
\date{\today}
\begin{document}

\begin{abstract}
  Farkas' lemma for semidefinite programming characterizes semidefinite
  feasibility of linear matrix pencils in terms of an alternative
  spectrahedron.  In the well-studied special case of linear programming, a
  theorem by Gleeson and Ryan states that the index sets of irreducible
  infeasible subsystems are exactly the supports of the vertices of the 
  corresponding alternative polyhedron.

  We show that one direction of this theorem can be generalized to the
  nonlinear situation of extreme points of general spectrahedra.  The
  reverse direction, however, is not true in general, which we show by
  means of counterexamples. On the positive side, an irreducible infeasible
  block subsystem is obtained whenever the extreme point has minimal block
  support. Motivated by results from sparse recovery, we provide a
  criterion for the uniqueness of solutions of semidefinite block systems.
\end{abstract}

\maketitle

\section{Introduction}

The structure of infeasible linear inequality systems is quite well
understood. In particular, Farkas' Lemma, also called Theorem of the
Alternative, gives a characterization of infeasibility (see, e.g.,
\cite{schrijver-book}). Moreover, the basic building blocks are so-called
Irreducible Infeasible Systems (IISs, also called Irreducible Inconsistent
Systems), i.e., infeasible subsystems such
that every proper subsystem is feasible. An extension of the Theorem of the
Alternative, due to Gleeson and Ryan~\cite{GleR90}, states that the IISs of
an infeasible linear inequality system correspond exactly to the vertices
of a so-called alternative polyhedron (see Theorem~\ref{thm:gleeson-ryan}). These IISs provide a means to
analyze infeasibilities of a system, see, e.g.,
\cite{Chi97b,ChiD91,VanLoon1981} 
and the book~\cite{Chi2008}.  Today,
standard optimization software can compute (hopefully) small IISs. Further
investigations include the mixed-integer case~\cite{GuiC99} and the
application within Benders' decomposition~\cite{CodF04}.

In this article, we consider infeasible systems in semidefinite form.
There are well-known generalizations of the Theorem of the
Alternative to this setting (see, e.g., \cite{tuncel2010}), 
although one has to be more careful, since
feasibility might only be attained in the limit -- see 
Proposition~\ref{prop:SDPAlternative} for a more
precise statement. As in the linear case, solutions of certain alternative
systems give a certificate of the (weak) infeasibility of a semidefinite
system.

In this context, the following natural questions arise: How can infeasible
semidefinite systems be analyzed? What can be said about
the structure of irreducible infeasible semidefinite systems? Moreover, is
there a generalization of the theorem of Gleeson and Ryan to this setting?

These questions are motivated by solving mixed-integer semidefinite
programs using branch-and-bound in which an SDP is solved in every node
(see, e.g., \cite{gpu-2017}). Then, it often happens that these SDPs turn
out to be infeasible. One would now like to learn from this infeasibility
in order to strengthen the relaxations of other nodes. This is done in
mixed-integer and SAT solvers, see, e.g., \cite{Ach07b,WitBH2017}.

To come up with an appropriate definition of an IIS for a semidefinite
system it appears to be very natural to consider block systems. Then, an IIS is
given by an inclusion minimal set of infeasible block subsystems (see
Definition~\ref{def:BlockSubsystems}). We will show in
Section~\ref{sec:AlternativeSystems} that one direction of the above
mentioned connection can be generalized: there always exists an extreme
point of the alternative system that corresponds to a given IIS, see
Theorem~\ref{th:iis-gives-extreme}.  The reverse direction, however, is not
true in general, which we show and discuss via two counterexamples, see
Examples~\ref{ex:blocklinear} and~\ref{ex:Blocksdp}. On the positive side,
whenever an extreme point has (inclusionwise) minimal block support, the
corresponding subsystem forms indeed an IIS. This leads to the general task
to compute such points.

In the particular case in which the alternative semidefinite system has a
unique solution, this algorithmic challenge simplifies to solving one
semidefinite program. Motivated by results from sparse recovery, we provide
a criterion for the uniqueness of solutions of semidefinite block
systems. In Section~\ref{se:recovery}, we generalize the results in
\cite{elhamifar-vidal-2012,kdxh-2011,wang-tang-2009,wxt-2011} to give
unique recovery characterizations for a block semidefinite system in
Theorem~\ref{thm:blocksparse1}. Further perspectives
and open questions are given in Section~\ref{se:perspectives}.

\section{Infeasible Systems and Block Structure}
\label{sec:BlockStructure}

We use the following notation. Let $\sym^n$ be the set of
all (real) symmetric $n \times n$ matrices, and ``$\succeq$'' denotes
that a symmetric matrix is positive semidefinite (psd).
For a matrix $A \in \sym^n$ and
$I \subseteq [n] \define \{1, \dots, n\}$, let $A_I$ be the submatrix
containing the rows and columns of~$A$ indexed by~$I$. 
For $A$, $B \in \sym^n$, we denote the inner
product by
\[
\sprod{A}{B} = \tr(A\T B)
= \sum_{i,j=1}^n A_{ij}\, B_{ij},
\]
where $\tr(\cdot)$ denotes the trace.  $\|A\|_2$ denotes the
operator norm $\|A\|_2 = \max_{1 \le j \le n} |\lambda_j(A)|$, where
$\lambda_1(A), \ldots, \lambda_n(A)$ are the eigenvalues of $A$.

Throughout the paper, let 
$A_0, \dots, A_m \in \sym^n$. For $y \in \R^m$, we consider the
linear \emph{(matrix) pencil}
\[
A(y) \define A_0 - \sum_{i=1}^m y_i A_i
\]
and the \emph{linear matrix inequality} (LMI) $A(y) \succeq 0$. With
respect to infeasibility, we will use the following result, where $\id$ denotes 
the identity matrix.

\begin{proposition}\label{prop:SDPAlternative}
  Either $A(y) + \varepsilon \id \succeq 0$ is feasible for every
  $\varepsilon > 0$ or there exists $X \succeq 0$ with $\sprod{A_i}{X} =
  0$, $i \in [m]$, and $\sprod{A_0}{X} = -1$.
\end{proposition}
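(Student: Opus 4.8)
The plan is to prove the statement as a theorem of the alternative via a strong separation argument in the space $\sym^n$ equipped with the trace inner product $\sprod{\cdot}{\cdot}$, in which the dual cone of $\psd$ is $\psd$ itself. First I would record the easy half, namely that the two alternatives cannot hold simultaneously. Indeed, if $X \succeq 0$ satisfies $\sprod{A_i}{X} = 0$ for $i \in [m]$ and $\sprod{A_0}{X} = -1$, then $\sprod{A(y)}{X} = -1$ for every $y$, whereas $A(y) + \varepsilon\id \succeq 0$ would force $0 \le \sprod{A(y)+\varepsilon\id}{X} = -1 + \varepsilon\tr(X)$; since $X \neq 0$ and $X \succeq 0$ give $\tr(X) > 0$, choosing $\varepsilon < 1/\tr(X)$ yields a contradiction. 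Hence at most one alternative holds, and it remains to show that if the first fails then the second holds.

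Assume the first alternative fails. Negating the quantifier ``for every $\varepsilon > 0$'' produces a fixed $\varepsilon_0 > 0$ for which $A(y) + \varepsilon_0\id \succeq 0$ has no solution, i.e.\ $\lambda_{\min}(A(y)) < -\varepsilon_0$ for all $y \in \R^m$. Writing $\mathcal{L} \define \linspan\{A_1,\dots,A_m\}$, I would consider the affine subspace $\mathcal{A} \define A_0 + \tfrac{\varepsilon_0}{2}\,\id + \mathcal{L}$, whose points are exactly the matrices $A(y) + \tfrac{\varepsilon_0}{2}\id$; each of these has $\lambda_{\min} = \lambda_{\min}(A(y)) + \tfrac{\varepsilon_0}{2} < -\tfrac{\varepsilon_0}{2}$. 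The step I expect to be the main obstacle is to upgrade this disjointness from $\psd$ into \emph{strong} separation, since two disjoint closed convex sets need not be strongly separable in general. This is rescued by a uniform spectral gap: the Frobenius distance of a symmetric matrix $N$ to $\psd$ equals the norm of its negative spectral part, so $\dist(N,\psd) \ge |\lambda_{\min}(N)|$ whenever $\lambda_{\min}(N) < 0$. Consequently $\dist(\mathcal{A},\psd) \ge \tfrac{\varepsilon_0}{2} > 0$.

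With a positive distance in hand, the separating hyperplane theorem yields a nonzero $X \in \sym^n$ and scalars strictly separating $\mathcal{A}$ from $\psd$; after orienting the sign I would arrange $\sprod{X}{Q} \ge 0$ for all $Q \in \psd$ and $\sprod{X}{P} < 0$ for all $P \in \mathcal{A}$. Boundedness below on the cone $\psd$ forces $X$ into the dual cone, so $X \succeq 0$, and makes $\inf_{Q \in \psd}\sprod{X}{Q} = 0$; boundedness above on the affine subspace forces $\sprod{X}{A_i} = 0$ for each $i \in [m]$, as otherwise $\sprod{X}{\cdot}$ would be unbounded along $\mathcal{L}$. Evaluating the strict inequality at the base point gives $\sprod{A_0}{X} + \tfrac{\varepsilon_0}{2}\tr(X) < 0$, and since $X \succeq 0$ with $X \neq 0$ implies $\tr(X) > 0$, we obtain $\sprod{A_0}{X} < 0$. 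Finally I would rescale $X$ by the positive factor $-1/\sprod{A_0}{X}$ to normalize $\sprod{A_0}{X} = -1$, while preserving both $X \succeq 0$ and $\sprod{A_i}{X} = 0$, which produces exactly the certificate of the second alternative. The only genuinely delicate point is the positivity of the distance established above; everything else is the routine translation of a separating functional into dual-cone membership together with the orthogonality conditions.
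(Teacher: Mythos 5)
Your proof is correct, but it takes a genuinely different route from the paper. The paper deduces the statement from SDP strong duality: it sets up the explicit dual pair $\inf\{\eta : A(y)+\eta\,\id \succeq 0,\ \eta \ge 0\}$ and $\sup\{-\sprod{A_0}{X} : \sprod{A_i}{X}=0,\ \tr(X)\le 1,\ X \succeq 0\}$, exhibits a Slater point for the first ($y=0$, $\eta = \norm{A_0}_2+1$), invokes the strong duality theorem to get attainment in the second, and handles the case where the infimum is only approached via a sequence $\eta_k \searrow 0$ --- which is precisely where the ``for every $\varepsilon>0$'' formulation enters. You instead argue by direct strong separation in $\sym^n$: negating the first alternative gives a fixed $\varepsilon_0>0$ with $\lambda_{\min}(A(y)) < -\varepsilon_0$ for all $y$, and your key observation --- that the Frobenius distance of a symmetric matrix to $\psd$ is at least $\lvert\lambda_{\min}\rvert$, so the shifted affine subspace $A_0 + \tfrac{\varepsilon_0}{2}\id + \linspan\{A_1,\dots,A_m\}$ has distance at least $\tfrac{\varepsilon_0}{2}$ from $\psd$ --- is exactly what upgrades mere disjointness (insufficient for unbounded closed convex sets) to strong separability. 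The remaining steps (dual-cone membership $X \succeq 0$ from boundedness below on $\psd$, orthogonality $\sprod{A_i}{X}=0$ from boundedness above on the affine subspace, strictness giving $\sprod{A_0}{X}<0$, then rescaling) are sound, and you correctly note self-duality of $\psd$. In effect you have unfolded the separation argument that underlies the strong duality theorem the paper cites as a black box: the paper's proof is shorter given that theorem, while yours is self-contained and makes transparent \emph{why} the $\varepsilon$-weakened (weak feasibility) formulation is needed --- at $\varepsilon_0 = 0$ one would lose the uniform spectral gap and obtain only weak separation, which cannot certify $\sprod{A_0}{X} < 0$ strictly. You also prove mutual exclusivity of the two alternatives, which the paper does not state or need (its ``either--or'' is inclusive, and it proves only the implication that failure of the second alternative forces the first); this is harmless extra content.
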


This statement is equivalent to Sturm's Farkas' Lemma for semidefinite
programming
(see Lemmas~3.1.1 and 3.1.2 in \cite{klep-schweighofer-2013}) and
a variation
of Theorem~2.21 in \cite{tuncel2010}, and its proof is provided for completeness.

\begin{proof}
  Consider the following dual pair of semidefinite programs (SDPs):
  \begin{align}
    & \inf\; \{ \eta \suchthat A(y) + \eta\, \id \succeq 0,\; \eta \geq 
0\},\label{eq:AltD} \\
    & \sup\; \{ - \sprod{A_0}{X} \suchthat \sprod{A_i}{X} = 0,\; i \in [m],\;
    \tr(X) \leq 1,\; X \succeq 0\}.\label{eq:AltP}
  \end{align}
  Setting $y = 0$, $\eta = \norm{A_0}_2 + 1$ shows that~\eqref{eq:AltD} has
  a Slater point. Moreover, $X = 0$ is feasible for~\eqref{eq:AltP}. The
  strong duality theorem (see, e.g., Theorem~2.14 in \cite{tuncel2010}) implies
  that~\eqref{eq:AltP} attains its optimal value and the objective values
  are the same.

  Suppose that no $X \succeq 0$ with $\sprod{A_i}{X} = 0$, $i \in [m]$,
  $\sprod{A_0}{X} = -1$ exists. By scaling, this implies that no such $X$
  exists with $\sprod{A_0}{X} < 0$. 
  And since the zero matrix is feasible 
  for~\eqref{eq:AltP}, the optimal value of \eqref{eq:AltP} is 0.
  By the strong duality theorem, the optimal value of~\eqref{eq:AltD} is
  also 0.
  Either~\eqref{eq:AltD} attains this value and we are done, or
  there exists a sequence $(y^k, \eta_k)$ such that $A(y^k) + \eta_k\, \id
  \succeq 0$ and $\eta_k \searrow 0$. This implies the theorem. 
\end{proof}

\begin{remark}
  In slight deviation from parts of the literature, we call $A(y) \succeq
  0$ \emph{weakly feasible}, if for every $\varepsilon > 0$ the system
  $A(y) + \varepsilon \id \succeq 0$ is feasible; compare this, for instance,
  to the definition in~\cite{tuncel2010}, which requires $\tilde{A}_0 -
  \sum_{i=1}^m y_i\, A_i \succeq 0$ to be feasible for some $\tilde{A}_0$
  such that $\norm{A_0 - \tilde{A}_0}_2 < \varepsilon$. Moreover, $A(y)
  \succeq 0$ is \emph{weakly infeasible} if it is not weakly feasible. Note
  the slight inaccuracy of this naming convention, which should, however,
  not lead to confusion in the present paper.
\end{remark}

\begin{corollary}
  Assume that there exists $\bar{X} \succ 0$ with $\sprod{A_i}{\bar{X}}
  = 0$, $i \in [m]$. Then, either $A(y) \succeq 0$ is feasible or there
  exists $X \succeq 0$ with $\sprod{A_i}{X} = 0$, $i \in [m]$, and
  $\sprod{A_0}{X} = -1$.
\end{corollary}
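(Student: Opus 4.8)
The plan is to mirror the proof of Proposition~\ref{prop:SDPAlternative} and to observe that the hypothesis supplies exactly the one ingredient that is missing there, namely a Slater point for the primal program~\eqref{eq:AltP}. In the proof of the proposition, only~\eqref{eq:AltD} was known to be strictly feasible; this forced attainment in~\eqref{eq:AltP} but left open whether the infimum in~\eqref{eq:AltD} is attained, and that gap is precisely what produces mere weak feasibility rather than feasibility. I would close the gap by invoking strong duality a second time, now in the opposite direction.

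Concretely, I would argue by contraposition. Assume that no $X \succeq 0$ with $\sprod{A_i}{X} = 0$ for $i \in [m]$ and $\sprod{A_0}{X} = -1$ exists. Exactly as in the proposition, scaling shows that then no feasible $X$ of~\eqref{eq:AltP} can satisfy $\sprod{A_0}{X} < 0$, so the optimal value of~\eqref{eq:AltP} equals $0$, and hence the common optimal value of the dual pair is $0$.

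Next I would use the hypothesis to produce a Slater point for~\eqref{eq:AltP}. Given $\bar X \succ 0$ with $\sprod{A_i}{\bar X} = 0$ for $i \in [m]$, the scaled matrix $t\,\bar X$ with $t > 0$ sufficiently small still satisfies $t\,\bar X \succ 0$ and $\sprod{A_i}{t\,\bar X} = 0$, while $\tr(t\,\bar X) = t\,\tr(\bar X) < 1$; thus it strictly satisfies the conic constraint as well as the inequality constraint of~\eqref{eq:AltP}. Since~\eqref{eq:AltP} is then strictly feasible and bounded, the strong duality theorem (Theorem~2.14 in~\cite{tuncel2010}) guarantees that its dual~\eqref{eq:AltD} \emph{attains} its optimal value $0$. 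Hence there exists $(y^*, \eta^*)$ with $A(y^*) + \eta^*\,\id \succeq 0$, $\eta^* \ge 0$ and $\eta^* = 0$, that is $A(y^*) \succeq 0$, so $A(y) \succeq 0$ is feasible.

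The argument is essentially routine once the proposition is in hand; the only point requiring care is the bookkeeping of the Slater condition and its correct use. In the proposition, strict feasibility of~\eqref{eq:AltD} yields attainment in~\eqref{eq:AltP}; here I additionally exploit strict feasibility of~\eqref{eq:AltP} (supplied by $\bar X$) to force attainment in~\eqref{eq:AltD}, and I must check that the scaling preserves the equalities $\sprod{A_i}{\bar X} = 0$ while turning the trace bound into a strict inequality. With both programs strictly feasible, attainment on both sides together with a zero duality gap follows, and the upgrade from weak feasibility to genuine feasibility is then immediate.
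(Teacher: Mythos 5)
Your proposal is correct and follows essentially the same route as the paper: both use $\bar X$ (suitably scaled) as a Slater point for~\eqref{eq:AltP} to force attainment of the optimal value in~\eqref{eq:AltD}, and then repeat the contrapositive argument of Proposition~\ref{prop:SDPAlternative} to conclude $A(y^*) \succeq 0$ with $\eta^* = 0$. Your only deviation is cosmetic -- you scale so that $\tr(t\,\bar X) < 1$ holds strictly and cite the strong duality theorem directly, whereas the paper scales to $\tr(\bar X) \leq 1$ and cites the attainment corollary -- so the two proofs are the same in substance.
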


\begin{proof}
  By scaling~$\bar{X}$ to satisfy $\tr(\bar{X}) \leq 1$, the assumption
  guarantees that~\eqref{eq:AltP} above has a Slater point and therefore
  that the optimal value of~\eqref{eq:AltD} is attained, see, e.g.,
  Corollary 2.17 in \cite{tuncel2010}. The remaining part of the proof is as
  for the one of Proposition~\ref{prop:SDPAlternative}.
\end{proof}

Our subsequent definition of an alternative spectrahedron will
allow to handle structured semidefinite systems. To motivate this
viewpoint, consider a simple example where the goal is to check
whether two given halfplanes 
$H_i = \{y \in \R^2 \suchthat \alpha_i y_1 + \beta_i y_2 + \gamma_i \geq 0\}$
($i \in [2]$) and a given disc $D = \{y \in \R^2 \suchthat \norm{y-c}_2^2 \le r^2\}$
in the Euclidean plane $\R^2$ with center $c \in \R^2$ have a common point.
The smallest LMI-representation (w.r.t.\ matrix size) of $D$ is
\[
K(r,c;y) \define \begin{pmatrix} r + c_1 - y_1 & y_2-c_2 \\ y_2 - c_2 & r - c_1 + y_1
\end{pmatrix} \succeq 0,
\]
and thus the existence of a point in $H_1 \cap H_2 \cap D$ is equivalent
to the feasibility of the LMI
\begin{align}
  \label{eq:structured1}
  A(y) = 
  \begin{pmatrix} 
    \alpha_1 y_1 + \beta_1 y_2 + \gamma_1 & & \\ 
    & \alpha_2 y_1 + \beta_2 y_2 + \gamma_2 & \\ & & K(r,c;y)
  \end{pmatrix} \succeq 0.
\end{align}
In order to capture such natural structure within semidefinite systems,
one arrives at block systems. In particular, already in the simple
example this allows then to consider the $2 \times 2$-subsystem of the
disc as an entity. Formally, this yields the following.

\begin{definition}
  Let $k \ge 1$ and $B_1, \dots, B_k \neq \emptyset$ a partition of the
  set $[n]$. 
  A linear pencil $A(y)$ is in \emph{block-diagonal form} with blocks
  $B_1, \ldots, B_k$ if and only if each $A_i$ is 0 outside of the blocks $B_1, \dots, B_k$,
  i.e., $(A_i)_{st} = 0$ for all $(s,t) \notin (B_1 \times B_1) \cup \dots
  \cup (B_k \times B_k)$ and all $i \in [m]$. Note that the blocks might be
  \emph{decomposable}, i.e., at least one block consists of blocks of
  smaller size while still retaining the block structure of $A(y)$.
\end{definition}

\begin{assumption}\label{aspt:nontrivial}
  To avoid trivial infeasibilities, we will assume that for each block
  $B_i$, $i \in [k]$, there exists $y \in \R^m$ such that $A(y)_{B_i}
  \succeq 0$ is weakly feasible.
\end{assumption}

\begin{definition}\label{def:BlockSubsystems}
  Let $A(y)$ be in block-diagonal form with blocks $B_1, \ldots, 
B_k$.
  \begin{enumerate}[(a)]
  \item For $I \subseteq [k]$, the \emph{block subsystem of $A(y)$ with respect 
to $I$}
     is given by $A(y)_{B(I)}$ for the index set $B(I) \define
    \bigcup_{i\in I} B_i$. By convention, $B(\emptyset) = \emptyset$ and
    $A(y)_{\emptyset}$ is a feasible system.
  \item A block subsystem with respect to some $I\subseteq [k]$ is an 
\emph{irreducible
      infeasible subsystem (IIS)} iff $A(y)_{B(I)}\succeq 0$ is weakly
    infeasible, but $A(y)_{B(I')} \succeq 0$ is weakly feasible for
    all $I'\subsetneq I$.
  \item Given a matrix $X \in \sym^n$, its \emph{block support} $\BS(X)$ is 
defined as
    \[
    \BS(X) \define \{i \in [k] \suchthat X_{B_i} \neq 0\}.
    \]
  \end{enumerate}
\end{definition}

\begin{remark}
  Linear inequality systems arise if all matrices $A_0, \dots, A_m$ of $A(y)$ 
are
  diagonal. In this case, each inequality is of the form
  \[
  (A_0)_{jj} - \sum_{i=1}^m y_i\, (A_i)_{jj} \geq 0,\; j \in [n].
  \]
  If this system is written as $D x \leq d$, then IISs correspond to
  infeasible subsystems of $Dx \leq d$ such that each proper subsystem is
  feasible.

  The linear case arises, in particular, if the block system satisfies $k =
  n$ (and hence $\card{B_i} = 1$); then the blocks are not decomposable. However,
  it is also possible that the blocks are decomposable. In this
  case, the system consists of $k$ linear inequality systems $D_1 x
  \leq d_1$, \dots, $D_k x \leq d_k$, each defining a polyhedron. If the
  intersection of these polyhedra is empty, the original LMI is infeasible;
  see Example~\ref{ex:blocklinear} below.
\end{remark}

\begin{remark}
  An alternative way to define IISs would be to consider subsets $S
  \subseteq [n]$ such that $A(y)_S \succeq 0$ is (weakly) infeasible, but
  $A(y)_{\hat{S}} \succeq 0$ is (weakly) feasible for every proper subset
  $\hat{S}$ of~$S$. However, this definition would not retain the
  structure within semidefinite systems such as~\eqref{eq:structured1}. 
  As a consequence, we currently do not know to which extent our subsequent
  investigations can be transferred to that model.
\end{remark}

\section{Alternative Systems}
\label{sec:AlternativeSystems}

In view of Proposition~\ref{prop:SDPAlternative}, we define the following,
where the abbreviation $\Sigma$ for the LMI $A(y) \succeq 0$
will allow for a convenient notation. For general background on spectrahedra,
we refer to \cite{bpt-2013,theobald-spectrahedral-2017}.

\begin{definition}
  The \emph{alternative spectrahedron} for $\Sigma: A(y) \succeq 0$ is
  \[
  S(\Sigma) \define \{X \succeq 0 \suchthat \sprod{A_i}{X} = 0,\; i \in
  [m],\; \sprod{A_0}{X} = -1\}.
  \]
\end{definition}

\begin{assumption}\label{aspt:block}
  By standard polarity theory, 
  a block structure of the system can also be assumed for $X
  \in S(\Sigma)$.  Thus, we only consider matrices $X \in S(\Sigma)$ in
  block-diagonal form, where the blocks are indexed by $\BS(X)$.
\end{assumption}

The definition of the alternative spectrahedron immediately implies:

\begin{lemma}\label{lemma:IIS_MinBlockSupport}
  Let $\Sigma: A(y) \succeq 0$ be a weakly infeasible semidefinite system
  with blocks $B_1, \ldots, B_k$.
  \begin{enumerate}[(a)]
  \item For any $X \in S(\Sigma)$, there exists an infeasible
    subsystem of $\Sigma$ with block support contained in $\BS(X)$.
  \item For any $X \in S(\Sigma)$ with inclusion-minimal block support,
    the index set $\BS(X)$ defines an IIS of $\Sigma$.
  \end{enumerate}
\end{lemma}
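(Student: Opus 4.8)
The plan is to reduce both statements to Proposition~\ref{prop:SDPAlternative}, applied not to $\Sigma$ but to its block subsystems, exploiting that block-diagonality of the $A_i$ makes the inner product split over blocks. Concretely, since by Assumption~\ref{aspt:block} we may take every $X \in S(\Sigma)$ in block-diagonal form, for each index set $I \subseteq [k]$ one has $\sprod{A_i}{X} = \sum_{j \in [k]} \sprod{(A_i)_{B_j}}{X_{B_j}}$, and the terms with $X_{B_j} = 0$ drop out. I will combine this with two structure-preserving operations: restricting a matrix to the blocks in its support, and zero-padding a certificate of a subsystem back into $\sym^n$. Both preserve positive semidefiniteness, and, because each $A_i$ vanishes off the blocks, both preserve the values $\sprod{A_i}{\cdot}$.

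For part~(a), set $I \define \BS(X)$ and consider the principal submatrix $X_{B(I)}$, which is psd because $X \succeq 0$. Since $X_{B_j} = 0$ for every $j \notin I$ and each $A_i$ is block-diagonal, the block splitting gives
\[
\sprod{(A_i)_{B(I)}}{X_{B(I)}} = \sprod{A_i}{X} = 0 \quad (i \in [m]), \qquad \sprod{(A_0)_{B(I)}}{X_{B(I)}} = \sprod{A_0}{X} = -1 .
\]
Hence $X_{B(I)}$ is exactly an alternative certificate for the subsystem $A(y)_{B(I)} \succeq 0$, so by Proposition~\ref{prop:SDPAlternative} that subsystem is weakly infeasible; its block support is $I = \BS(X)$, which proves~(a).

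For part~(b), assume $\BS(X)$ is inclusion-minimal and suppose, for contradiction, that some $I' \subsetneq \BS(X)$ yields a weakly infeasible subsystem $A(y)_{B(I')} \succeq 0$. Proposition~\ref{prop:SDPAlternative} then supplies a certificate $X'' \succeq 0$ of size $\card{B(I')}$ with $\sprod{(A_i)_{B(I')}}{X''} = 0$ for $i \in [m]$ and $\sprod{(A_0)_{B(I')}}{X''} = -1$. Padding $X''$ with zeros on all remaining rows and columns produces $\tilde X \in \sym^n$ with $\tilde X \succeq 0$; using again that each $A_i$ vanishes off the blocks, $\sprod{A_i}{\tilde X} = \sprod{(A_i)_{B(I')}}{X''}$, so $\tilde X \in S(\Sigma)$. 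By Assumption~\ref{aspt:block} we may take $\tilde X$ block-diagonal, and then $\BS(\tilde X) \subseteq I' \subsetneq \BS(X)$, contradicting the minimality of $\BS(X)$. Therefore $A(y)_{B(I')} \succeq 0$ is weakly feasible for every $I' \subsetneq \BS(X)$, which together with the weak infeasibility from~(a) shows that $\BS(X)$ is an IIS.

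Since $S(\Sigma) \neq \emptyset$ by Proposition~\ref{prop:SDPAlternative} (as $\Sigma$ is weakly infeasible) and block supports range over the finite set of subsets of $[k]$, matrices of inclusion-minimal block support exist, so part~(b) is non-vacuous. I expect no serious obstacle here: the content is essentially bookkeeping. The only points needing care are invoking the correct direction of the equivalence in Proposition~\ref{prop:SDPAlternative} for the subsystems, and verifying that restriction and zero-padding preserve both positive semidefiniteness and the inner products $\sprod{A_i}{\cdot}$ — both of which follow directly from the block-diagonal structure.
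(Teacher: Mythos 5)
Your proof is correct and is precisely the argument the paper intends: the lemma is stated there without proof (``The definition of the alternative spectrahedron immediately implies''), and your two bookkeeping operations---restricting a block-diagonal certificate to its support and zero-padding a subsystem certificate back into $\sym^n$, both compatible with $\sprod{A_i}{\cdot}$ because the $A_i$ vanish off the blocks---are exactly that immediate argument, combined with Proposition~\ref{prop:SDPAlternative} applied to the block subsystems. The one direction you flag but do not verify (a certificate for $A(y)_{B(I)}$ forces weak infeasibility of that subsystem, i.e., the dichotomy is exclusive) is indeed standard weak duality: if $y$ were feasible for $A(y)_{B(I)} + \varepsilon \id \succeq 0$, then $0 \le \sprod{\bigl(A(y)_{B(I)} + \varepsilon \id\bigr)}{X_{B(I)}} = -1 + \varepsilon \tr\bigl(X_{B(I)}\bigr)$, which fails for small $\varepsilon > 0$, so your reading of Proposition~\ref{prop:SDPAlternative} is justified.
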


As mentioned in the introduction, in the linear case there exists a
characterization of IISs:

\begin{theorem}[Gleeson and Ryan~\cite{GleR90}]\label{thm:gleeson-ryan}
  Consider an infeasible system $\Sigma: A x \leq b$, where $A \in \R^{m
    \times n}$, $b \in \R^m$. The index sets of the IISs of $\Sigma$ are
  exactly the support sets of the vertices of the alternative polyhedron
  \[
  P(\Sigma) = \{y \in \R^m \suchthat y\T A = 0,\; y\T b =-1,\; y \geq 0\}.
  \]
\end{theorem}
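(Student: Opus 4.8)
The plan is to reduce everything to the theory of basic feasible solutions of a polyhedron in standard form. Writing $a_i \in \R^n$ for the $i$-th row of $A$ and setting $M_i = \binom{a_i}{b_i} \in \R^{n+1}$ and $c = (0,\dots,0,-1)\T$, one rewrites $P(\Sigma) = \{y \ge 0 \suchthat \sum_{i=1}^m y_i M_i = c\}$ as a standard-form polyhedron. I would use two classical facts throughout: (i) by Farkas' lemma, the subsystem indexed by $I \subseteq [m]$ is infeasible if and only if there exists $y \in P(\Sigma)$ with $\supp(y) \subseteq I$ (the normalization $y\T b = -1$ is possible because any certificate has $y\T b < 0$); and (ii) a point $y \in P(\Sigma)$ is a vertex if and only if the columns $\{M_i \suchthat i \in \supp(y)\}$ are linearly independent. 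The two inclusions below then give the asserted set equality.

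For the direction \emph{vertex $\Rightarrow$ IIS}, let $y$ be a vertex with $I = \supp(y)$. Infeasibility of the subsystem on $I$ is immediate from (i). For irreducibility it suffices, by monotonicity of feasibility, to check the maximal proper subsets $I \setminus \{j\}$. If some such subsystem were infeasible, (i) would supply a $y' \in P(\Sigma)$ with $\supp(y') \subseteq I \setminus \{j\}$; subtracting the two identities $\sum_{i} y_i M_i = c = \sum_i y'_i M_i$ yields $\sum_{i \in I}(y_i - y'_i) M_i = 0$ with a nonzero coefficient $y_j > 0$ on $M_j$, contradicting the linear independence from (ii).

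For the converse direction \emph{IIS $\Rightarrow$ vertex}, I would first obtain from (i) a certificate $\bar y \in P(\Sigma)$ with $\supp(\bar y) \subseteq I$. The minimality in the definition of an IIS forces $\supp(\bar y) = I$, since otherwise $\supp(\bar y)$ would be a proper infeasible subsystem. The remaining and most delicate point is that $\bar y$ need not be a vertex, so I must descend to a vertex without shrinking the support. To this end consider the face $F = P(\Sigma) \cap \{y \suchthat y_j = 0 \text{ for all } j \notin I\}$, which is nonempty as it contains $\bar y$. The same minimality argument shows that every point of $F$ has support exactly $I$. Since $F$ is contained in the nonnegative orthant it is pointed, hence, being nonempty, has an extreme point $y^*$; and an extreme point of a face of $P(\Sigma)$ is an extreme point of $P(\Sigma)$. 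Then $y^*$ is a vertex of $P(\Sigma)$ with $\supp(y^*) = I$, completing the proof.

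I expect the main obstacle to be exactly this last step: a Farkas certificate for an IIS is generally not a vertex, and naively passing to a vertex of $P(\Sigma)$ could decrease the support. The resolution is the observation -- itself a consequence of irreducibility -- that on the face $F$ the support is constant and equal to $I$, so any vertex of $F$ inherits full support and is simultaneously a vertex of the whole polyhedron.
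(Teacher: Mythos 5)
Your proof is correct. Note that the paper itself does not prove this theorem --- it defers to \cite{GleR90} and \cite{pfetsch2003} --- so the comparison is with those standard arguments and with the paper's semidefinite generalization, and on both counts your proposal is essentially the canonical route. Both of your key facts are sound: Farkas with the normalization $y\T b = -1$ (legitimate since any certificate has $y\T b < 0$) gives the correspondence between infeasible subsystems and supports of points of $P(\Sigma)$, and the basic-feasible-solution characterization gives vertex $\Leftrightarrow$ linear independence of the support columns $M_i$. Your vertex-$\Rightarrow$-IIS direction (reduction to maximal proper subsets, then a linear dependence with nonzero coefficient $y_j > 0$ contradicting independence) is exactly the linear-algebraic mechanism that the paper's counterexamples, Examples~\ref{ex:blocklinear} and~\ref{ex:Blocksdp}, show has no spectrahedral analogue --- which is precisely why this direction of the theorem fails to generalize. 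Your handling of the delicate IIS-$\Rightarrow$-vertex step --- passing to the face $F = P(\Sigma) \cap \{y \suchthat y_j = 0,\ j \notin I\}$, observing via irreducibility that the support is constantly $I$ on $F$, and taking a vertex of the pointed face, which is a vertex of $P(\Sigma)$ --- is, up to notation, the same argument as the paper's alternative proof of Theorem~\ref{th:iis-gives-extreme} (there one intersects $S(\Sigma)$ with $\{X \suchthat X_{B_i} = 0,\ i \notin I\}$, uses pointedness of the psd cone to get an extreme point, and a trace argument plus minimality to conclude). So your proof is not only correct but correctly identifies which half of the equivalence is robust under generalization and which half hinges on linear structure.
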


A proof can be found in~\cite{GleR90} and~\cite{pfetsch2003}. Note that in
the non decomposable linear case, $P(\Sigma)$ is equivalent to the alternative
spectrahedron $S(\Sigma)$.

One goal of this paper is to investigate whether/how far Theorem~\ref{thm:gleeson-ryan}
generalizes to the spectrahedral situation. We can show that one of the
directions can be generalized.

\begin{theorem}\label{th:iis-gives-extreme}
  Let $\Sigma: A(y) \succeq 0$ be a weakly infeasible LMI with blocks $B_1, \ldots, B_k$.
  For each index
  set $I$ of an IIS, there exists an extremal point of $S(\Sigma)$ with
  block support $I$.
\end{theorem}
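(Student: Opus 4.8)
The plan is to produce the desired point in three stages: first exhibit \emph{some} element of $S(\Sigma)$ supported on the blocks of $I$, then cut $S(\Sigma)$ down by this support to a face of the psd cone and extract an extreme point there, and finally argue that such a point is automatically extreme for all of $S(\Sigma)$ and has block support exactly~$I$. For the first stage I would use irreducibility to populate the relevant set. Since $I$ is the index set of an IIS, the block subsystem $A(y)_{B(I)} \succeq 0$ is weakly infeasible, so Proposition~\ref{prop:SDPAlternative}, applied to the pencil with matrices $(A_i)_{B(I)}$, yields $X^I \succeq 0$ of size $\card{B(I)}$ with $\sprod{(A_i)_{B(I)}}{X^I} = 0$ for $i \in [m]$ and $\sprod{(A_0)_{B(I)}}{X^I} = -1$. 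Padding $X^I$ by zeros outside $B(I) \times B(I)$ gives $\bar X \succeq 0$, and since each $A_i$ is block-diagonal, $\sprod{A_i}{\bar X} = \sprod{(A_i)_{B(I)}}{X^I}$, so $\bar X \in S(\Sigma)$ with $\BS(\bar X) \subseteq I$. Hence
\[
S_I \define \{ X \in S(\Sigma) \suchthat \BS(X) \subseteq I \}
\]
is nonempty.

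Next I would identify $S_I$ as the intersection of $S(\Sigma)$ with a face of $\psd$. Writing $V = \{ v \in \R^n \suchthat v_s = 0 \text{ for } s \notin B(I) \}$ and letting $P$ be the orthogonal projection onto $V^\perp$, the set $F \define \{ X \succeq 0 \suchthat \sprod{P}{X} = 0 \}$ is an exposed face of $\psd$, consisting exactly of the psd matrices supported on $B(I) \times B(I)$. For $X \succeq 0$ the condition $\BS(X) \subseteq I$ forces the diagonal entries indexed outside $B(I)$ to vanish, hence the corresponding rows and columns to vanish, so $S_I = S(\Sigma) \cap F$. Now $S_I$ is closed, convex, nonempty, and contained in the pointed cone $\psd$, which contains no line; thus $S_I$ contains no line and therefore has an extreme point $X^\star$. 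The crucial point is that $X^\star$ is in fact extreme in $S(\Sigma)$: if $X^\star = \tfrac12(Y + Z)$ with $Y, Z \in S(\Sigma) \subseteq \psd$, then $0 = \sprod{P}{X^\star} = \tfrac12(\sprod{P}{Y} + \sprod{P}{Z})$ with both summands nonnegative forces $Y, Z \in F$, hence $Y, Z \in S_I$, and extremality of $X^\star$ in $S_I$ gives $Y = Z = X^\star$.

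It remains to pin down the block support. By construction $\BS(X^\star) \subseteq I$. If the inclusion were strict, say $\BS(X^\star) = I' \subsetneq I$, then $X^\star$ is supported on $B(I') \times B(I')$, so its restriction $X^\star_{B(I')} \succeq 0$ satisfies $\sprod{(A_i)_{B(I')}}{X^\star_{B(I')}} = 0$ and $\sprod{(A_0)_{B(I')}}{X^\star_{B(I')}} = -1$; by Proposition~\ref{prop:SDPAlternative} this certifies that $A(y)_{B(I')} \succeq 0$ is weakly infeasible, contradicting the irreducibility of the IIS~$I$. Hence $\BS(X^\star) = I$, as required.

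I expect the main obstacle to be the middle stage: recognizing the block-support constraint as an exposed face of the psd cone and exploiting that a face absorbs the endpoints of any segment through one of its points. This is exactly what upgrades extremality in the small set $S_I$ to extremality in the full alternative spectrahedron $S(\Sigma)$; the remaining ingredients --- nonemptiness from Proposition~\ref{prop:SDPAlternative}, existence of an extreme point from the absence of lines in $\psd$, and the support identity from irreducibility --- are comparatively routine.
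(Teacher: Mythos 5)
Your proof is correct and coincides in essence with the paper's second (``alternative'') proof of this theorem: intersect $S(\Sigma)$ with the matrices vanishing outside the blocks of~$I$, extract an extreme point of this pointed set, upgrade its extremality to all of $S(\Sigma)$ via the facial/trace argument (your $\sprod{P}{X}=0$ formulation is exactly the paper's blockwise trace computation), and exclude $\BS(X^\star) \subsetneq I$ by irreducibility via Proposition~\ref{prop:SDPAlternative}. Your explicit nonemptiness step --- padding the certificate for the subsystem $A(y)_{B(I)}$ by zeros --- is a minor but welcome polish on a point the paper leaves implicit, while the paper's first proof takes a different route through a Carath\'eodory-type decomposition of $S(\Sigma)$ into extreme points and extreme rays.
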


The following proof proceeds by revealing the convex-geometric structure of
the alternative spectrahedron.

\begin{proof}
  Without any loss of generality, we can assume that $I = \{1, \dots, t\}$ for
  some $t \in [k]$. By Proposition~\ref{prop:SDPAlternative}, the alternative
  spectrahedron $S(\Sigma)$ contains a feasible point $X$ supported exactly
  on the blocks $B_1,\ldots,B_t$.  In order to show that the alternative
  spectrahedron contains an extremal point with block support $\{1, \ldots,
  t\}$, we first observe that $S(\Sigma)$ has at least one extremal point.
  This follows from the fact that the positive semidefinite cone is pointed
  and thus any slice of a subspace with this cone cannot have a nontrivial
  lineality space either.

  By Theorem 18.5 in~\cite{rockafellar-book}, 
  the alternative spectrahedron can be written in the form
  \[
  S(\Sigma) = \conv(E \cup F),
  \]
  where $E$ is the set of its extremal points and $F$ is the set of
  extremal directions of $S(\Sigma)$.  Hence, by a general version of
  Carath\'{e}odory's Theorem (see Theorem 17.1 in \cite{rockafellar-book}),
  there exist $r \ge 1$, $s \ge 0$, extremal points $V^{(1)}, \ldots,
  V^{(r)}$ and extremal rays $W^{(1)}, \ldots, W^{(s)}$ of the alternative
  spectrahedron such that
  \[
  X \ = \ \sum_{i=1}^r \lambda_i\, V^{(i)} + \sum_{j=1}^s \mu_j\, W^{(j)}
  \]
  with $\lambda_i$, $\mu_j > 0$ and $\sum_{i=1}^r \lambda_i = 1$. Since
  $V^{(i)}, W^{(j)}$ are positive semidefinite and $\lambda_i$, $\mu_j >
  0$, the block support of each $V^{(i)}$, $W^{(j)}$ must be contained in
  the block support of $X$. Due to the minimality of $I$, all $V^{(i)}$
  must have the same block support. Hence, the block support of $V^{(1)}$
  is exactly $I$, so that it is an extremal point with the desired property.
\end{proof}

We also provide the following shorter proof, which, however, reveals less 
structural insights.

\begin{proof}[Alternative proof.]
  Consider the intersection
  \[
  S' \define S(\Sigma) \cap \{X \suchthat X_{B_i} = 0, \; i \notin I\}.
  \]
  Then, $S'$ has an extreme point~$X' \in S'$, since it is the intersection
  of the pointed positive semidefinite cone with an affine space and
  therefore also pointed. Now let $I' \define \BS(X')$ be the block support
  of~$X'$. Then, $I' \subseteq I$ by construction. If $I' = I$, then we are done,
  since $X'$ is an extreme point of~$S(\Sigma)$ as well: Assume $X' = \lambda Z + (1-
  \lambda) Y$, $0 < \lambda < 1$, would be the strict convex combination of
  two other feasible points~$Y$ and $Z$, such that w.l.o.g.\ $Z$ has a
  support in a block~$B$ outside of $I'$. Then,
  \[
  \underbrace{\tr(X'_B)}_{= 0} = \lambda \underbrace{\tr(Z_B)}_{>0} + (1 -
  \lambda) \underbrace{\tr(Y_B)}_{\geq 0},
  \]
  would give a contradiction.

  Moreover, if $I' \subsetneq I$, then $X'$ shows that $A(y)_{B(I')} \succeq 0$ is
  infeasible. Thus, $I$ would not be minimal.
\end{proof}

The converse of this theorem is, however, not true in general. 
This direction may already fail in the presence of blocks of 
size~2. We will demonstrate
this by two counterexamples. The first one is linear, but decomposable. 
The second one is not decomposable, but nonlinear.

\begin{figure}
  \begin{center}
  \includegraphics[width=0.3\textwidth]{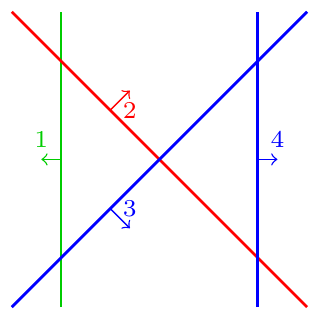}
  \end{center}
  \caption{Illustration for Example~\ref{ex:blocklinear}.}
  \label{fig:blocklinear}
\end{figure}

\begin{example}\label{ex:blocklinear}
  Let $m = 2$, $k = 3$ and
  \begin{align*}
    A_0 =
    \begin{pmatrix}
      0 &    &    &   \\
        & -1 &    &   \\
        &    & -1 & 0 \\
        &    &  0 & -2
    \end{pmatrix},
    \quad
    A_1 =
    \begin{pmatrix}
      1 &    &    &    \\
        & -1 &    &    \\
        &    & -1 &  0 \\
        &    &  0 &  -1
    \end{pmatrix},
    \quad
    A_2 =
    \begin{pmatrix}
      0 &    &   &   \\
        & -1 &   &   \\
        &    & 1 & 0 \\
        &    & 0 & 0
    \end{pmatrix}.
  \end{align*}
  The blocks are $B_1 = \{1\}$, $B_2 = \{2\}$, $B_3 = \{3, 4\}$, and this
  example corresponds to the three polyhedra
  \begin{align*}
    & P_1 \define \{y \in \R^2 \suchthat y_1 \leq 0\},\quad
    P_2 \define \{y \in \R^2 \suchthat y_1 + y_2 \geq 1\},\\
    & P_3 \define \{y \in \R^2 \suchthat -y_1 + y_2 \leq -1,\; y_1 \geq 2\},
  \end{align*}
  see Figure~\ref{fig:blocklinear} for an illustration. In this case, only
  the diagonal elements of the points $X$ in the alternative
  spectrahedron are relevant, which can be formulated as the polyhedron
  \begin{align*}
    S(\Sigma) \define \left\{ x \in \R^{1 + 1 + 2} \suchthat
      \begin{pmatrix}
        0 & -1 & -1 & -2 \\
        1 & -1 & -1 & -1 \\
        0 & -1 & \phantom{-} 1 & \phantom{-}0
      \end{pmatrix}
      x = \begin{pmatrix} -1 \\ \phantom{-}0 \\ \phantom{-}0 \end{pmatrix}
      ,\; x \geq 0\right\}.
  \end{align*}
  $S(\Sigma)$ is a one-dimensional polytope with the two vertices
  \[
  (1, \tfrac{1}{2}, \tfrac{1}{2}, 0)^\top \quad\text{and}\quad
  (\tfrac{1}{2}, 0, 0, \tfrac{1}{2})^\top.
  \]

  For the vertex $\tilde{x} = (1, \tfrac{1}{2}, \tfrac{1}{2}, 0)\T$
  of~$S(\Sigma)$, we have $\BS(\tilde{x}) = \{1,2,3\}$. However, this does not
  correspond to an IIS, since $\{1,3\}$ gives a proper subsystem that is
  infeasible.
\end{example}

To come up with non-decomposable blocks, the next counterexample deals with
a deformed version.

\begin{example}\label{ex:Blocksdp}
  For $\varepsilon \geq 0$, consider the linear matrix pencil given by
  \begin{align*}
    A_0 = 
    \begin{pmatrix} 
      0 &&& \\ 
      & -1 && \\ 
      && -1 & \varepsilon \\ 
      && \varepsilon & -2 
    \end{pmatrix}
  \end{align*}
  and the matrices $A_1$ and $A_2$ of Example~\ref{ex:blocklinear}. For
  $\varepsilon=0$, the system $\Sigma: A(y) \succeq 0$ specializes to
  Example~\ref{ex:blocklinear}. For $\varepsilon>0$ the two lines in
  Figure~\ref{fig:blocklinear} indexed by 3 and 4
  deform to a quadratic curve; see
  Figure~\ref{fig:deformed}. Note that the quadratic curve has a second
  component corresponding to the lower right block being negative definite.

  \begin{figure}
    \begin{center}
    \includegraphics[width=0.4\textwidth]{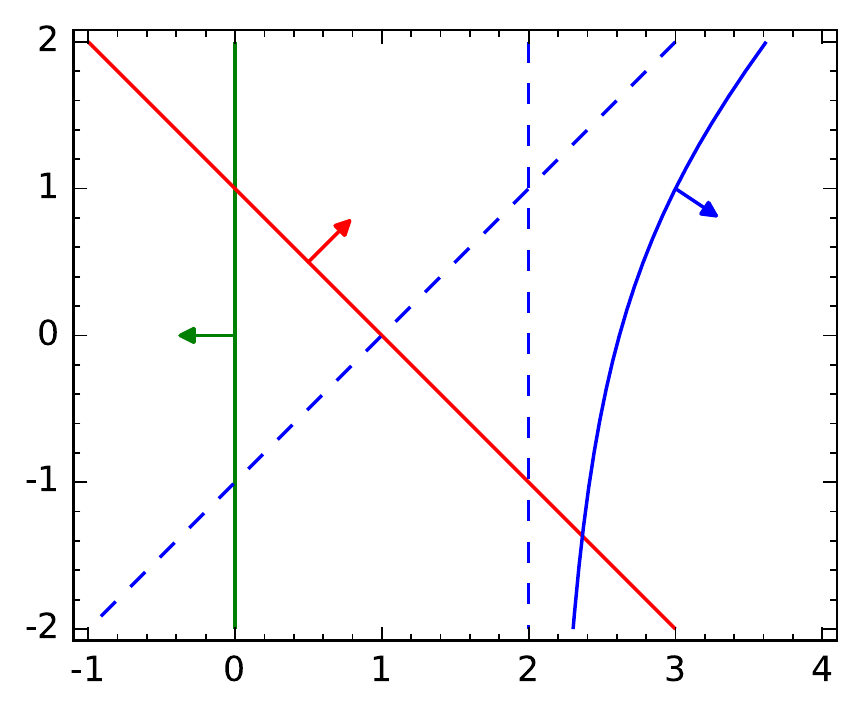}
    \end{center}
    \caption{Deformation of Example~\ref{ex:blocklinear}, $\varepsilon=1$. For $\varepsilon = 0$,
      the quadratic curve degenerates to Figure~\ref{fig:blocklinear}, as illustrated by the
      dashed blue lines.}
    \label{fig:deformed}
  \end{figure}

  The alternative spectrahedron $S(\Sigma)$ is given by the set of 
  symmetric block matrices 
  \[
    X = \diag \Big( \begin{bmatrix}X_{11}\end{bmatrix},
  \begin{bmatrix}X_{22}\end{bmatrix},\begin{bmatrix}X_{33} & X_{34} \\
  X_{34} & X_{44} \end{bmatrix} \Big)
  \]
  satisfying
  \begin{align}
    X_{11} = 1 - X_{44} + 2\, \varepsilon X_{34} & \ge 0, \label{eq:spectx11} \\
    X_{22} = X_{33} = \tfrac{1}{2} - X_{44} + \varepsilon X_{34} & \ge 0, \label{eq:spectx22} \\
    X_{44} & \ge 0, \label{eq:spectx44} \\
    (\tfrac{1}{2} - X_{44} + \varepsilon X_{34}) \cdot X_{44} - X_{34}^2 & \ge 
0. \label{eq:spect4}
  \end{align}
  In $(X_{44},X_{34})$-coordinates, $S(\Sigma)$ is the set bounded by the
  ellipse in Figure~\ref{fig:sdpextreme} (for $\varepsilon =
  1$). 
  For $\varepsilon = 0$, the ellipse becomes a circle.
  Independent of $\varepsilon$, i.e., for any $\varepsilon \ge 0$,
  there are two distinguished extreme
  points, namely $(X_{44},X_{34})=(0,0)$ and
  $(X_{44},X_{34})=(\tfrac{1}{2},0)$, corresponding to the matrices
  \begin{align*}
    \begin{pmatrix}
      1 &&&\\ &\tfrac{1}{2}&& \\ && \tfrac{1}{2} & 0 \\ && 0 & 0  
    \end{pmatrix}
    \quad\text{ and }\quad
    \begin{pmatrix}
      \tfrac{1}{2} &&&\\ & 0&& \\ && 0 & 0\\ && 0 & \tfrac{1}{2} 
    \end{pmatrix}
  \end{align*}
  in $S(\Sigma)$. The diagonals of these matrices are exactly the two vertices 
of the 
  alternative polyhedron as in Example~\ref{ex:blocklinear}. While the right 
  matrix corresponds to an IIS, the left matrix does not.

  \begin{figure}
    \centering
    \begin{subfigure}{.44\textwidth}
      \centering
      \includegraphics[width=\linewidth]{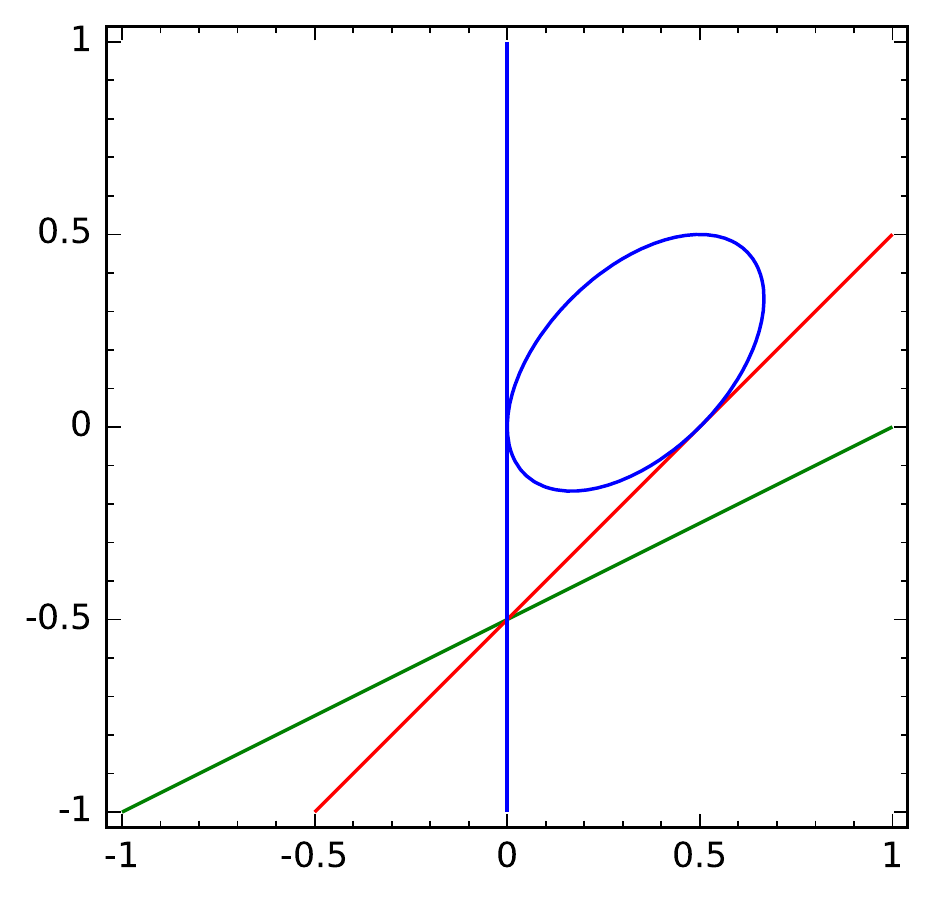}
      \caption{$\varepsilon=1$.}
    \end{subfigure} \hspace*{.1\textwidth}
    \begin{subfigure}{0.44\textwidth}
      \centering
      \includegraphics[width=\linewidth]{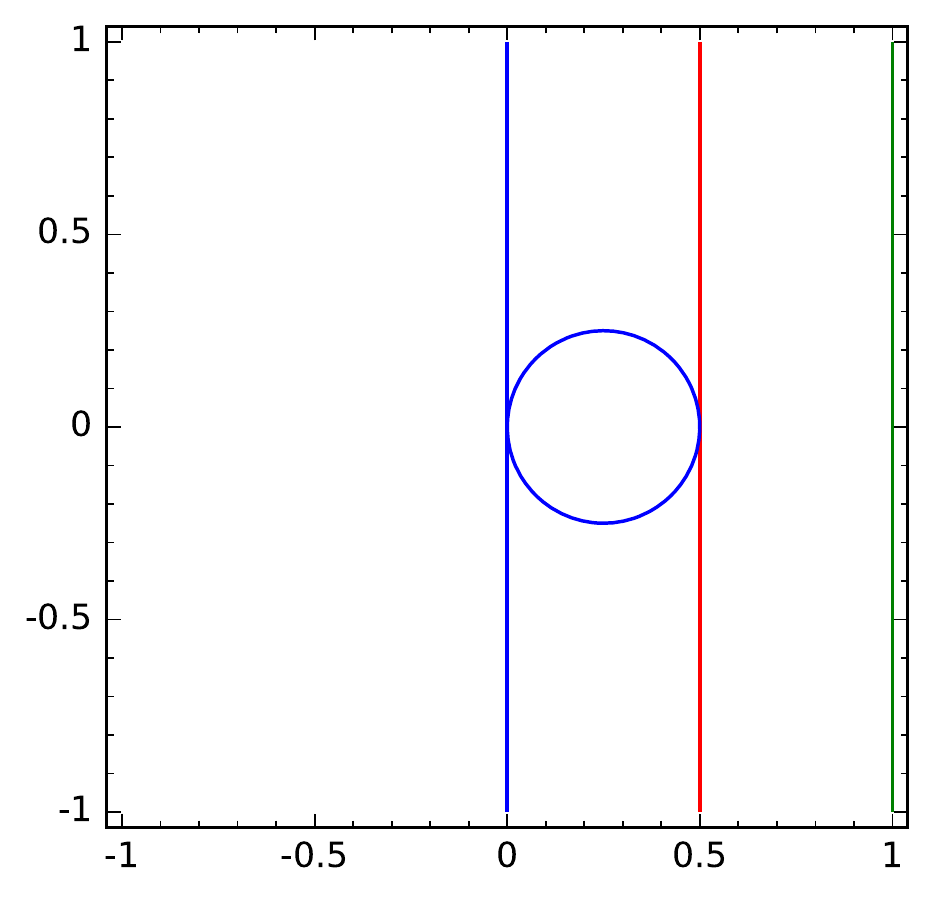}
      \caption{$\varepsilon=0$.}
    \end{subfigure}
    \caption{Alternative spectrahedron for Example~\ref{ex:Blocksdp} in 
      $(X_{44},X_{34})$-coordinates:
      $X_{44}$ is on the horizontal axis and $X_{34}$ on the vertical axis.
      The straight lines in green, red and blue correspond to 
      to~\eqref{eq:spectx11}, \eqref{eq:spectx22} and \eqref{eq:spectx44}, respectively.}
    \label{fig:sdpextreme}
  \end{figure}
\end{example}

These two examples motivate the question of how to compute IISs. By
Lemma~\ref{lemma:IIS_MinBlockSupport} it would suffice to compute a solution
with minimal block support. This can be obtained by a greedy approach in
which one iteratively solves semidefinite programs and fixes blocks to~0.
Note, however, that computing an IIS with minimal cardinality block support
is NP-hard already in the linear case, see~\cite{AmaPT03}.

In the particular case in which the alternative semidefinite system has a
unique solution, this algorithmic challenge simplifies to solving one
semidefinite program.
 In the next section we discuss universal conditions
under which the alternative semidefinite system has a unique solution.

\section{Universal Unique Solutions of Alternative Semidefinite Systems}
\label{se:recovery}

For a block matrix $X \in \sym^n$ with blocks $B_1, \ldots, B_k$, 
denote by $\sigma^B_+(X)$ the number of
blocks with at least one positive eigenvalue of $X$ and by $\sigma^B_-(X)$
the number of blocks with at least one negative eigenvalue.  Note that in
case of a positive semidefinite matrix $X$, the value $\sigma^B_+(X)$
coincides with $\card{\BS(X)}$.

The following statement is a generalization of
Theorem~1 in \cite{wang-tang-2009} to the case of block semidefinite systems.
Our proof employs a block semidefinite generalization of standard techniques from linear systems.
See also \cite{kdxh-2011} for a variant for linear systems and
Theorem~5 in \cite{wxt-2011} for a different (and non-block) generalization
of that theorem to the semidefinite case.

\begin{theorem}\label{thm:blocksparse1}
  For all psd block matrices $X^0 \in \sym^n$ with $\sigma^B_+(X^0) \leq t$, the set
  \[
  \{ X \succeq 0 \, : \, \sprod{A_i}{X} = \sprod{A_i}{X^0}, \, i \in [m]\}
  \]
  is a singleton if and only if for all symmetric $V \neq 0$, with
  $\sprod{A_i}{V} = 0$, $i \in [m]$, we have $\sigma^B_+(V) > t$ and
  $\sigma^B_-(V) > t$.
\end{theorem}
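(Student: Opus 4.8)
The plan is to read the statement as a block-structured null space (sparse recovery) characterization and to exploit the block-wise Jordan decomposition of symmetric matrices. I would write $\mathcal{N} \define \{V \in \sym^n \suchthat \sprod{A_i}{V} = 0,\ i \in [m]\}$ for the null space of the constraint map. Since $X^0$ always lies in the set in question, that set is a singleton precisely when there is no $V \in \mathcal{N} \setminus \{0\}$ with $X^0 + V \succeq 0$. Two preliminary observations streamline both directions. First, because $\mathcal{N}$ is a linear subspace and the blocks of $-V$ have their positive and negative eigenvalues interchanged, one has $\sigma^B_+(V) = \sigma^B_-(-V)$; hence the two conditions ``$\sigma^B_+(V) > t$ for all $V \in \mathcal{N}\setminus\{0\}$'' and ``$\sigma^B_-(V) > t$ for all $V \in \mathcal{N}\setminus\{0\}$'' are equivalent, and it suffices to work with $\sigma^B_-$. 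Second, for a psd matrix $W$ one has $\sigma^B_+(W) = \card{\BS(W)}$, as recorded before the theorem. Finally, for every (block-diagonal) $V$ I would decompose each diagonal block via its spectral decomposition into a positive and a negative part, obtaining $V^+, V^- \succeq 0$ with $V = V^+ - V^-$, where $\BS(V^-)$ is exactly the set of blocks on which $V$ has a negative eigenvalue, so that $\sigma^B_+(V^-) = \card{\BS(V^-)} = \sigma^B_-(V)$ (and symmetrically for $V^+$).

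For the direction ``null space property $\Rightarrow$ singleton'' I would argue by contradiction. Fix a psd $X^0$ with $\sigma^B_+(X^0) \le t$ and suppose some $X \succeq 0$, $X \neq X^0$, satisfies $\sprod{A_i}{X} = \sprod{A_i}{X^0}$ for all $i$. Then $V \define X - X^0 \in \mathcal{N} \setminus \{0\}$, and the hypothesis forces $\sigma^B_-(V) > t$. On the other hand, for every block $B_j$ with $j \notin \BS(X^0)$ the block of $X^0$ vanishes, so $V_{B_j} = X_{B_j}$, which is psd as a principal submatrix of $X \succeq 0$; such a block carries no negative eigenvalue. Consequently every block on which $V$ has a negative eigenvalue lies in $\BS(X^0)$, whence $\sigma^B_-(V) \le \card{\BS(X^0)} = \sigma^B_+(X^0) \le t$, a contradiction.

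For the converse I would prove the contrapositive. Suppose the null space property fails, so there is $V \in \mathcal{N} \setminus \{0\}$ with $\sigma^B_+(V) \le t$ or $\sigma^B_-(V) \le t$; replacing $V$ by $-V$ if necessary (again using $\mathcal{N} = -\mathcal{N}$ and the eigenvalue interchange) I may assume $\sigma^B_-(V) \le t$. I then set $X^0 \define V^-$ and $X \define V^+$. Both are psd, they are distinct because $X - X^0 = V \neq 0$, and $\sprod{A_i}{X} - \sprod{A_i}{X^0} = \sprod{A_i}{V} = 0$ for all $i$. Moreover $\sigma^B_+(X^0) = \sigma^B_-(V) \le t$. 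Thus $X^0$ is an admissible psd matrix for which the set is not a singleton, which is exactly the negation of the left-hand side.

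I expect no deep obstacle here: the argument is essentially a block version of the null space property for sparse recovery, and both implications reduce to the same positive/negative splitting. The one point requiring care is that this splitting must be performed block by block, so that $V^+$ and $V^-$ stay in the block-diagonal ambient space fixed by Assumption~\ref{aspt:block} and so that $\BS(V^-)$ really counts the blocks with a negative eigenvalue of $V$; with the standing block-diagonal convention this is immediate. The only genuinely structural ingredient is the identification $\sigma^B_+ = \card{\BS(\cdot)}$ on psd matrices, which is what lets the block-sparsity bound $\sigma^B_+(X^0) \le t$ interact with the eigenvalue counts of the null-space element $V$.
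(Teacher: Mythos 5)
Your proposal is correct and takes essentially the same route as the paper: your contrapositive construction $X^0 = V^-$, $X = V^+$ from the blockwise positive/negative spectral split is precisely the paper's $X^1 = S\T D^1 S$, $X^2 = S\T D^2 S$ with $D = D^2 - D^1$. Your other direction also matches in substance, except that you bound $\sigma^B_-(X - X^0) \le \card{\BS(X^0)} \le t$ directly from $V_{B_j} = X_{B_j} \succeq 0$ for $j \notin \BS(X^0)$, which slightly streamlines the paper's detour through the principal axis transformation $X^0 = S\T D^0 S$ without changing the underlying idea.
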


\begin{proof}
  Assume w.l.o.g.\ that there exists a symmetric $V \neq 0$ with
  $\sprod{A_i}{V} = 0$, $i \in [m]$, and $\sigma^B_-(V) \le t$. The proof
  of the case
  $\sigma^B_+(V) \le t$ is analogous, since the mapping $V \mapsto -V$
  exchanges positive and negative eigenvalues and we have
  $\sprod{A_i}{(-V)}=0$, $i \in [m]$, as well.
  For simplicity we further assume
  $\sigma^B_-(V)=t$.  Then, there exists a decomposition
  \[
  V \ = \ S\T D S,
  \]
  where $S$ is a regular block matrix (with respect to the blocks $B_1$,
  \dots, $B_k$) and $D = \diag(\lambda_1, \dots, \lambda_n)$ where
  $\lambda_i$ are the eigenvalues of $V$. In fact $S$ can be assumed to be
  orthonormal ($S\T = S^{-1}$) by performing a principal axis
  transformation for each block and combining the parts.

  By reordering we can assume that the negative eigenvalues appear
  in the first~$t$ blocks. We then define the diagonal matrices $D^1$, $D^2
  \in \R^{n \times n}$ with
  \[
  D^1_{ii} \define
  \begin{cases}
    - \lambda_i & \text{if } \lambda_i < 0,\\
    0 & \text{otherwise},
  \end{cases}
  \qquad
  D^2_{ii} \define
  \begin{cases}
    \lambda_i & \text{if } \lambda_i > 0,\\
    0 & \text{otherwise},
  \end{cases}
  \qquad
  i \in [n] \, .
  \]
  Then, $D^2 - D^1 = D$. We now obtain the block matrices
  \[
  X^1 = S\T D^1 S,\qquad X^2 = S\T D^2 S,
  \]
  with $X^1 \neq X^2$, $X^1 \succeq 0$, $X^2 \succeq 0$, and
  $\sigma^B_+(X^1) = t$. By construction $X^1_{B_i} = 0$ for all $i = t+1,
  \dots, k$. Moreover, for $i \in [m]$, we have
  \[
  \sprod{A_i}{X^1} = \sprod{A_i}{(S\T D^1 S)} = \sprod{A_i}{(S\T (D^2 - D) S)}.
  \]
  Since $\sprod{A_i}{(S\T D S)} = \sprod{A_i}{V} = 0$, this implies
  \[
  \sprod{A_i}{X^1} = \sprod{A_i}{(S\T D^2 S)} = \sprod{A_i}{X^2}.
  \]
  Hence, the set $\{X \succeq 0 \suchthat \sprod{A_i}{X}
  = \sprod{A_i}{X^1}, \, i \in [m]\}$ also contains $X^2$ and is thus not a
  singleton.

  Conversely, assume that there exists a psd matrix $X^0$ with
  $\sigma^B_+(X^0) \le t$ such that
  \[
  \{X \succeq 0 \suchthat \sprod{A_i}{X} = \sprod{A_i}{X^0}, \; i \in [m] \}
  \]
  is not a singleton. That is, there exists a matrix $\bar{X} \succeq 0$ with
  \[
  \sprod{A_i}{\bar{X}} = \sprod{A_i}{X^0}, \quad i \in [m]
  \]
  and $\bar{X} \neq X^0$. By the principal axis transformation, $X^0$ can
  be written as
  \[
  X^0 = S\T D^0 S
  \]
  with an orthonormal block matrix $S$ (w.r.t.\ the blocks $B_1, \dots, B_k$)
  and $D^0_{B_i} \geq 0$, $D^0_{B_i} \neq 0$ for $i = 1, \dots, t$ and
  $D^0_{B_i} = 0$ for $i = t + 1, \dots, k$. Setting $\bar{Y} = S
  \bar{X} S\T$, we have $\bar{Y} \succeq 0$ and $\bar{X} = S\T \bar{Y}
  S$.

  The block matrix $V = \bar{X} - X^0$ then satisfies $\sprod{A_i}{V} = 0$ and
  \[
  V \ = \ S\T (\bar{Y} - D^0) S.
  \]
  Then, in $\bar{Y} - D^0$ only the first $t$ blocks can have negative eigenvalues.
  Since the transformation matrix $S$ respects the block structure,
  we have $\sigma^B_-(V) \le t$.
\end{proof}

\begin{remark}
  In the special case in which all blocks have size 1,
  Theorem~\ref{thm:blocksparse1} can be stated as follows: In this case all
  matrices $A_i$, $i = 0, \dots, m$ are diagonal. Let $a^i = ((A_1)_{ii},
  \dots, (A_m)_{ii})$, and let $A$ be the matrix formed by the
  rows~$a^i$. Then, $\sprod{A_i}{X} = \sprod{A_0}{X^0}$ is equivalent to $A
  x = A x^0$. The condition states that for all $v \neq 0$ with $A v = 0$
  we have $\card{\{i \suchthat v_i < 0\}} > t$ and $\card{\{i \suchthat v_i
    > 0\}} > t$, which is Theorem~1 in \cite{wang-tang-2009}.
\end{remark}

\begin{example}\label{ex:BlocklinearUnique}
  Consider again the matrices $A_0, A_1, A_2$ from Example~\ref{ex:blocklinear}.
  Setting
  \[
  X^0 =
  \begin{pmatrix}
    \tfrac{1}{2} & 0 & 0 & 0 \\
    0 & 0 & 0 & 0 \\
    0 & 0 & 0 & 0 \\
    0 & 0 & 0 & \tfrac{1}{2}
  \end{pmatrix}
  \]
  yields $\sprod{A_0}{X^0} = -1$ and $\sprod{A_1}{X^0} = \sprod{A_2}{X^0} =
  0$. In this case $\sigma^B_+(X^0) = 2$. For the corresponding system of 
  equations to have $X^0$ as the unique solution, we would need $\sigma^B_+(V) > 2$ and
  $\sigma^B_-(V) > 2$ for all symmetric $V \neq 0$ with $\sprod{A_0}{V} =
  \sprod{A_1}{V} = \sprod{A_2}{V} = 0$. However,
  \[
  V =
  \begin{pmatrix}
    1 & 0 & 0 & 0 \\
    0 & 1 & 0 & 0 \\
    0 & 0 & 1 & 0 \\
    0 & 0 & 0 & -1
  \end{pmatrix}
  \]
  satisfies the equality constraints, but has $\sigma^B_-(V) = 1$, which is
  in accordance with Example~\ref{ex:blocklinear}, in which two extreme
  point solutions arise.
\end{example}

\begin{example}\label{ex:UniqueLPCase}
  Let $n$ be even and consider the $(n-1) \times n$ linear system of
  equations
  \begin{equation}
    \label{eq:systemeq2}
    Dv \define
    \begin{pmatrix*}
      1 &  1 &  0 & 0 & \cdots & 0 \\
      0 &  1 &  1 & 0 & \cdots & 0 \\
      \vdots && \ddots & \ddots & & \vdots \\
      0 & \cdots &  0 &  1 &  1 & 0 \\
      0 & \cdots &  0 &  0 &  1 & 1 \\
    \end{pmatrix*}
    v = 0.
  \end{equation}
  Then, form a symmetric matrix~$V = \diag(v)$. Let $A_i$, $i = 1, \dots, n-1$, be
  appropriate symmetric $n \times n$ matrices such that
  \[
  \sprod{A_i}{V} = 0,\quad i = 1, \dots, n-1,
  \]
  is equivalent to $Dv = 0$. Without any loss of generality,
  these are block matrices with respect to
  the blocks $B_1 = \{1\}$, \dots, $B_n = \{n\}$. In the notation of
  Theorem~\ref{thm:blocksparse1}, $m \define n-1$ and $V \neq 0$ with
  $\sprod{A_i}{V} = 0$ for all $i \in [m]$ is equivalent to $v \neq 0$ with
  $Dv = 0$.
  
  Then, $v_1 = -v_2 = v_3 = -v_4 = \dots = v_{n-1} = -v_n$. We can assume
  w.l.o.g.\ (by possible multiplication with $-1$) that $v_1 > 0$. Then,
  $v_1, v_3, \dots, v_{n-1}$ will be positive, while $v_2, v_4, \dots, v_n$
  will be negative. Thus, any solution~$V \neq 0$ to $\sprod{A_i}{V} = 0$,
  $i \in [m]$, satisfies $\sigma^B_+(V) = \sigma^B_-(V) = n/2$. By
  Theorem~\ref{thm:blocksparse1}, the system $\sprod{A_i}{X} =
  \sprod{A_i}{X^0}$, $i \in [m]$, has the unique (symmetric) solution $X^0
  \succeq 0$ if $\sigma^B_+(X^0) < n/2$. Note that the rank of the
  matrix~$D$ is $n-1$, which shows that the system has infinitely many
  solutions if $X^0$ is an arbitrary matrix.
\end{example}

\begin{remark}
  Consider the condition on~$V$ in Theorem~\ref{thm:blocksparse1}. The
  total number of blocks is at most~$n$, and if a block contributes both to
  $\sigma^B_+(V)$ and $\sigma^B_-(V)$, the block has to have at least
  size~2. Therefore, $\sigma^B_+(V) + \sigma^B_-(V) \leq n$. This implies
  that the largest $t$ for which $\sigma^B_+(V) > t$ and $\sigma^B_-(V) >
  t$ can hold is $\floor{n/2} - 1$. Example~\ref{ex:UniqueLPCase} shows
  that this bound is tight (if $n$ is odd, one can ignore a single variable
  in~$v$ and use the construction on the remaining part). Note that for even $n$
  this bound can only be attained in the LP-case, i.e., if all matrices are
  diagonal.
\end{remark}

\begin{example}\label{ex:UniqueSDPCase}
  Let $n$ be divisible by 3, define $k \define n/3$, and consider the $2 \times
  2$ blocks $B_1 = \{1,2\}$, \dots, $B_k = \{2k-1, 2k\}$. Take the same
  $(n-1) \times n$ linear system of equations as in
  Example~\ref{ex:UniqueLPCase} and fill in the variables of a solution~$v$
  into the symmetric $2k \times 2k$ block matrix $V$ as follows:
  \[
  V =
  \begin{pmatrix}
    v_1 & v_3 &    &     &        &   &   \\
    v_3 & v_2 &    &     &        &   &   \\
        &    & v_4 & v_6 &        &   &   \\
        &    & v_6 & v_5 &        &   &   \\
        &    &    &     & \ddots &   &    \\
        &    &    &    &        & v_{3k-2} & v_{3k} \\
        &    &    &    &        & v_{3k}  & v_{3k-1} \\
  \end{pmatrix}.
  \]
  Let $A_i$, $i = 1, \dots, m \define n-1$, be symmetric 
  $n \times n$ matrices such that
  \[
  \sprod{A_i}{V} = 0,\quad i = 1, \dots, n-1
  \]
  is equivalent to $Dv = 0$ from~\eqref{eq:systemeq2}. We can assume
  that the $A_i$ are block matrices for the
  above blocks. As in Example~\ref{ex:UniqueLPCase}, assuming that $v_1 >
  0$, the equations imply that $v_1 = v_3 = \dots = v_{n-1} > 0$, while
  $v_2 = v_4 = \dots = v_n < 0$. Thus, denoting $\lambda = v_1$, each 
  $2 \times 2$ block has the following structure:
  \[
  \begin{pmatrix*}[r]
    \lambda & \lambda \\
    \lambda & -\lambda
  \end{pmatrix*}
  \quad\text{or}\quad
  \begin{pmatrix*}[r]
    -\lambda & -\lambda \\
    -\lambda & \lambda
  \end{pmatrix*}.
  \]
  In both cases, the eigenvalues are $\pm \sqrt{2}\, \lambda$. Therefore,
  each block is counted both in $\sigma^B_+(V)$ and in $\sigma^B_-(V)$.
  Thus, any solution~$V \neq 0$ to $\sprod{A_i}{V} = 0$, $i \in [m]$, will
  satisfy $\sigma^B_+(V) = \sigma^B_-(V) = k = n/3$. By
  Theorem~\ref{thm:blocksparse1}, the system $\sprod{A_i}{X} =
  \sprod{A_i}{X^0}$, $i \in [m]$, has the unique (symmetric) solution $X^0
  \succeq 0$ if $\sigma^B_+(X^0) < n/3$.
\end{example}

\begin{remark}
  Note that the uniqueness conditions of Example~\ref{ex:UniqueSDPCase}
  include matrices~$X^0$ with negative entries, which would not be allowed
  in the LP-case (as in Example~\ref{ex:UniqueLPCase}); for example, if
  $k-1$ blocks of $X^0$ consist of the positive definite matrices
  \[
  \begin{pmatrix*}[r]
    2   & -1 \\
    -1  &  2 
  \end{pmatrix*}
  \]
  and 0-blocks otherwise. This shows that while the size of~$t$ is possibly
  smaller than in the LP-case, the general spectrahedron case allows for a
  wider range of cases of $X^0$ in which uniqueness appears.
\end{remark}

\section{Perspectives and Open Problems}
\label{se:perspectives}

In Section~\ref{se:recovery}, we
have provided a criterion for particular semidefinite block systems to have
a unique feasible solution. If this particular situation does not arise, it
is an open question whether one can obtain an IIS by solving a single
semidefinite program.

By Lemma~\ref{lemma:IIS_MinBlockSupport} it would suffice to find solutions
of minimal block support. For a matrix~$X$ the number of nonzero blocks can
be written as
\[
\norm{X}_{2,0} \define \big\lVert (\norm{X_{B_1}}_2, \dots, \norm{X_{B_k}}_2) \big\rVert_0,
\]
where $\norm{x}_0$ denotes the number of nonzeros in a vector $x$. Thus,
it would suffice to solve the following problem to find an IIS:
\begin{equation}
  \label{eq:0norm}
  \min \big\{ \norm{X}_{2,0} \suchthat X \in S(\Sigma) \big\}.
\end{equation}
Unfortunately, the $\norm{\cdot}_{2,0}$ ``norm'' is nonconvex and thus hard
to handle, for instance, \eqref{eq:0norm} is NP-hard. However, for linear
systems recent developments, see, e.g., \cite{EldKB10,elhamifar-vidal-2012,Lin2013},
suggest to replace $\norm{X}_{2,0}$ by
\[
\norm{X}_{2,1} \define \big\lVert (\norm{X_{B_1}}_2, \dots,
\norm{X_{B_k}}_2) \big\rVert_1 = \sum_{i=1}^k \norm{X_{B_i}}_2,
\]
which leads to the following convex optimization problem:
\begin{equation}
  \label{eq:2norm}
  \min \Big\{ \sum_{i=1}^k \norm{X_{B_i}}_2 \suchthat X \in S(\Sigma) \Big\}.
\end{equation}

\begin{lemma}
  Problem~\eqref{eq:2norm} can be formulated as SDP.
\end{lemma}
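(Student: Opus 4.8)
The plan is to put~\eqref{eq:2norm} into epigraph form and then invoke the standard semidefinite representation of the operator norm. First I would introduce one scalar variable $t_i$ for each block $B_i$ and rewrite the objective as $\sum_{i=1}^k t_i$ together with the constraints $\norm{X_{B_i}}_2 \le t_i$; minimizing over the $t_i$ recovers the original objective, so the reformulated problem has the same optimal value and the same minimizing matrices $X$ as~\eqref{eq:2norm}.

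The key step is to replace each operator-norm constraint by a linear matrix inequality. For a symmetric matrix $M$ one has $\norm{M}_2 \le t$ if and only if $-t\id \preceq M \preceq t\id$, i.e.\ $t\id - M \succeq 0$ and $t\id + M \succeq 0$; this is immediate from $\norm{M}_2 = \max_j \card{\lambda_j(M)}$. Applying this blockwise to $M = X_{B_i}$ turns each constraint $\norm{X_{B_i}}_2 \le t_i$ into a pair of LMIs in the variables $(X, t_i)$. In fact, since every $X \in S(\Sigma)$ is positive semidefinite and block-diagonal, each block $X_{B_i}$ is itself psd, so $\norm{X_{B_i}}_2 = \lambda_{\max}(X_{B_i})$ and the single LMI $t_i\,\id - X_{B_i} \succeq 0$ already suffices, keeping the formulation compact.

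It then remains to observe that the feasible region $S(\Sigma)$ is itself semidefinite-representable: it is cut out by the linear equations $\sprod{A_i}{X} = 0$ ($i \in [m]$) and $\sprod{A_0}{X} = -1$ together with $X \succeq 0$. Collecting everything, the problem becomes the minimization of the linear objective $\sum_{i=1}^k t_i$ over $(X, t_1, \dots, t_k)$ subject to finitely many linear equalities and linear matrix inequalities, which is precisely an SDP; if one wishes for a single conic constraint, the several block LMIs can be stacked into one block-diagonal LMI.

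There is essentially no hard part here: the only two facts to verify are the epigraph reformulation and the operator-norm LMI characterization, both of which are standard. The one point worth stating explicitly is that the psd-ness of $X$ built into $S(\Sigma)$ permits the one-sided inequality $t_i\,\id - X_{B_i}\succeq 0$ in place of the two-sided bound, which I expect to be the only place where the block structure of $S(\Sigma)$ is actually used.
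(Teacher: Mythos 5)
Your proof is correct, but it follows a genuinely different route from the paper's. The paper's proof uses the same epigraph reformulation (variables $t_i$, objective $\sum_{i=1}^k t_i$), but then represents each constraint $\norm{X_{B_i}}_2 \le t_i$ by the second-order cone condition $\bigl(\sum_j x_j^2\bigr)^{1/2} \le t$ on the entries of the block, citing the standard fact that second-order cone constraints are special cases of semidefinite constraints; you instead invoke the eigenvalue characterization $\norm{M}_2 \le t \Leftrightarrow -t\,\id \preceq M \preceq t\,\id$ and sharpen it to the single LMI $t_i\,\id - X_{B_i} \succeq 0$ using that every principal block of a psd matrix is psd. Your route is arguably the more faithful one: the paper defines $\norm{\cdot}_2$ in Section~2 as the operator norm $\max_j \card{\lambda_j(\cdot)}$, and for a symmetric block the second-order cone condition on the entries encodes the Frobenius norm, not the operator norm, so the paper's argument is literally correct only under the Frobenius reading of $\norm{X_{B_i}}_2$ (either reading yields a valid convex surrogate for block support, so the lemma holds in both cases, but yours matches the stated definition). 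What the paper's approach buys is one scalar conic constraint per block under the Frobenius reading; what yours buys is consistency with the operator-norm definition and a compact one-sided matrix inequality --- with the caveat, which you correctly flag, that the one-sided form is legitimate only because $X \succeq 0$ is retained among the constraints defining $S(\Sigma)$.
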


\begin{proof}
  Use the second order-cone condition
  \[
  \Big\{ (x,t) \suchthat \big( \sum x_i^2\big)^{1/2} \le t \Big\}
  \]
  to represent $\norm{X_{B_i}}_2 \le t_i$ with new variables $t_i$ and
  minimize the objective function $\sum_{i=1}^k t_i$. It is well-known
  that second order conditions are special cases of semidefinite conditions
  (see, e.g., \cite{tuncel2010}).  Since $X \in S(\Sigma)$ is already a
  positive semidefinite condition, this concludes the proof.
\end{proof}

An interesting line of future research would investigate conditions under
which~\eqref{eq:2norm} provides an optimal solution for~\eqref{eq:0norm},
which would try to generalize the above mentioned results from the linear
to the block semidefinite case.

As pointed out by an anonymous referee, it would also be interesting to
understand properties, under which an infeasible semidefinite system
satisfies the converse of Theorem~\ref{th:iis-gives-extreme}, as well as
properties, under which every extreme point of the alternative spectrahedron
has minimal block support.

Finally, it remains as a natural question to study the combination
of our methods with exact duality theory versions for semidefinite
programming, such as the reformulation technique of \cite{liu-pataki-2015}.

\section{Conclusions}

We have shown that one direction of the Gleeson-Ryan-Theorem for infeasible
linear systems generalizes to infeasible block semidefinite systems, but the
other direction does not. To overcome the situation to identify IISs, we
have given a unique recovery characterization. Both the algorithmic question
touched in Section~\ref{se:perspectives} and the practical question of
how to effectively exploit IISs of semidefinite systems within semidefinite
integer programming solvers deserve further study.

\medskip

\noindent
{\bf Acknowledgment.} We thank the anonymous referees for helpful
suggestions.


\bibliographystyle{plain}

\end{document}